\newcommand{\subject}[1]{\begin{flushleft}\textbf{2000 MR  Subject Classification}: #1\end{flushleft}}
\newcommand{\university}[1]{\\[3mm]{\small #1}}
\newtheorem{defi}{Definition}[section]
\newtheorem{theorem}[defi]{Theorem}
\newtheorem{cor}[defi]{Corollary}
\newtheorem{exam}[defi]{Example}
\title{Some General New Einstein Walker Manifolds }
\author{Mehdi Nadjafikhah
\thanks{Corresponding author. School of Mathematics, Iran University of Science and Technology, Narmak, Tehran 1684613114, Iran. E-mail: m\underline{ }nadjafikhah@iust.ac.ir}
\university{}
 \and
 Mehdi Jafari
 \thanks{Department of Complementary Education, Payame Noor University, PO BOX 19395-3697, Tehran, Iran. E-mail: m.jafari@phd.pnu.ac.ir}
 \university{} }
 \date{}
\begin{document}
\maketitle
\hspace*{-6mm}\textbf{Abstract}  In this paper, Lie symmetry group method is applied to find the lie point symmetries group of a PDE system that is determined general form of four-dimensional Einstein Walker manifold. Also we will construct the optimal system of one-dimensional Lie subalgebras and investigate some of its group invariant solutions.
\\

\hspace*{-6mm}\textbf{Keywords}  {Lie symmetry group, invariant solution, optimal system of Lie sub-algebras, Walker manifold}
 \subject{70G65, 34C14, 53C50}
\section{Introduction}
As it is well known, the symmetry group method has an irrefragable role in the analysis of differential equations. The theory of Lie symmetry groups of differential equations was developed by Sophus Lie \cite{Lie}. By this method we can construct  new solutions from known ones, reduce the order of ODEs and investigate the invariant solutions (for more information about the other applications of Lie symmetries, see\cite{Olv1},\cite{Blucol} and \cite{BluKum}).
Lei's method led to an algorithmic approach to find special solution of differential equation by it's symmetry group. These solutions are obtained by solving the reduced system of differential equation having less independent variables than the original system. These solutions are called group invariant solutions. Bluman and Cole generalized the Lie's method for finding the group-invariant solutions\cite{BluCol}.
In this paper we apply this method to find the invariant solutions of a system of PDEs that determines spacial kind of Walker manifolds.
\subsubsection*{Pseudo-Riemannian manifolds}
Let $\langle\cdot,\cdot\rangle$ be a non-degenerate inner product on a vector space $V$. We can choose  a basis $\{e_i\}$ for $V$ so that
\begin{equation*}
\langle e_i ,e_j\rangle = \left\{
  \begin{array}{lcr} 0 &&  {i \ne j}  \\  \pm 1 && i = j  \\ \end{array} \right.
\end{equation*}
such a basis is called an orthonormal basis. We set $\varepsilon_i:=\langle e_i ,e_i\rangle$. Let $p$ be the number of indices $i$ with $\varepsilon_i=-1$ and  $q=dimV-p$ be the complementary index. The inner product is then said to have signature $(p,q)$; the integers $p$ and $q$ are independent of the particular orthonormal basis chosen. We can extend this argument  on manifolds. Let $\mathcal{M}=(M,g)$ where $M$ is a $m$ dimensional  manifold and $g$ is a symmetric non-degenerate smooth bilinear form on $TM$ of signature $(p,q)$, such a manifold called a pseudo-Riemannian manifold of signature $(p,q)$. So we have $p+q=m$. If $p=0$ i.e. , if $g$ is positive definite, we say $\mathcal{M}$ is a Riemannian manifold. If $(x_1,...,x_m)$ is a system of local coordinates on $M$, we may express $g=\sum_{i,j}g_{ij}dx_i\circ dx_j$ where $g_{ij}:=g(\partial_{x_i},\partial_{x_j})$ and $ {``}\circ ^{^{\textbf{,,}}}$ is symmetric product.
\subsubsection*{Walker manifolds}
Let $M$ be a pseudo-Riemannian manifold of signature $(p,q)$. We suppose given  a splitting of the tangent bundle in the form $TM=V_1\oplus V_2$ where $V_1$ and $V_2$  are smooth subbundles which are called distribution. This defines two complementary projection $\pi_1$ and $\pi_2$  of $TM$ onto  $V_1$ and $V_2$. We say that $V_1$ is a parallel distribution if $\nabla\pi_1=0$. Equivalently this means that if $X_1$ is any smooth vector field taking values in $V_1$, then $\nabla X_1$ again takes values in $V_1$. If $M$ is Riemannian, we can take $V_2=V^\bot_1$ to be the orthogonal complement of $V_1$ and in that case $V_2$ is again parallel. In the pseudo-Riemannian setting, $V_1\cap V_2$ need not be trivial. We say that $V_1$ is a null parallel distribution if $V_1$ is parallel and if the metric restricted to $V_1$ vanishes identically. Manifolds which admit null parallel distributions are called Walker manifolds.
 Walker showed that a canonical form of a $2n$-dimensional pseudo-Riemannian Walker manifold which admit a $n$-dimensional distribution is given by the metric tensor :
\[(g_{ij} ) = \left( {\begin{array}{*{20}c} 0 & {Id_n }  \\ {Id_n } & B  \\
\end{array} } \right)\]
where $Id_n$ is the $n \times n$ identity matrix and $B$ is a symmetric $n \times n$ matrix whose entries are functions of the $(x_1 ,...,x_{2n} )$. (for more details see \cite{Gar} and \cite{Walk}).

 If we assume that $n = 2$ we adopt the following result for 4-dimensional Walker manifolds;
 Let $M_{a,b,c}:=(\mathcal{O},g_{a,b,c})$, where $\mathcal{O}$ be an open subset of $\mathbb{R}^4$ and ${a,b,c}\in C^\infty(\mathcal{O})$ be smooth functions on $\mathcal{O}$, then
 \begin{equation*}
(g_{a,b,c} )_{ij}  = \left( \begin{array}{cccc}
   0 & 0 & 1 & 0  \\
   0 & 0 & 0 & 1  \\
   1 & 0 & a & c  \\
   0 & 1 & c & b
 \end{array}  \right)
 \end{equation*}
 So we can express the general form of 4-dimensional Walker manifolds as follow,
 \begin{eqnarray}
 \begin{array}{lclcl}
 g_{a,b,c}:=2(dx_1\circ dx_3+dx_2\circ  dx_4)+a(x_1,x_2,x_3,x_4)dx_3\circ dx_3\vspace*{2mm}\\
 \hspace{13mm}+b(x_1,x_2,x_3,x_4)dx_4\circ dx_4+2c(x_1,x_2,x_3,x_4)dx_3\circ dx_4.
 \end{array}
 \end{eqnarray}
 \subsubsection*{Einstein Walker manifolds}
 Let $D$ be a connection on manifold $M$. The curvature operator $\mathcal{R}$ is defined by the formula
\begin{equation*}
\mathcal{R}(X,Y)Z:=(D_XD_Y-D_YD_X-D_{[X,Y]})Z
\end{equation*}
The associated Ricci tensor $\rho$ is defined by
\begin{equation*}
\rho(X,Y):=\mathsf{Tr}\{Z\longrightarrow\mathcal{R}(Z,X)Y\}.
\end{equation*}
A Walker manifold is said to be Einstein Walker manifold if  its Ricci tensor is a scaler multiple of the metric at each point i.e. , there is a constant $c$ so that $\rho=cg$. We can see that $M_{a,b,c}$ is Einstein if and only if the functions $a$, $b$ and $c$ verify the  following system of PDEs (\cite{Gar}, page 81).
$$a_{11}-b_{22}=0,\hspace{2cm} b_{12}+c_{11}=0,\hspace{2cm}a_{12}+c_{22}=0,$$\vspace{-6mm}
$$\hspace{1mm}a_{1}c_{2}+a_{2}b_{2}-a_{2}c_{1}-{c_{2}}^{2}+2\,c_{}a_
{12}+b_{}a_{22}-2\,a_{24}-a_{}c_{12}+2c_{23}=0,$$
$$\hspace{2mm}a_{2}b_{1}-c_{1}c_{2}+c_{}a_{11}-a_{14}-b_{23}-a_{}c_{11}-c_{}c_{12}+c_{13}-b_{}c_{22}+c_{24}=0,$$ $$\hspace{-1mm}a_{1}b_{1}-b_{1}c_{2}+b_{2}c_{1}-{c_{1}}^{2}+a_{}b_{11}+2\,c_{}b_{12}-2b_{13}-b_{}c_{12}+2\,c_{14}=0.$$
This system is hard to handle, for this reason we consider the spacial case in this paper. Assume that the functions $a$, $b$ and $c$ only depend on $x_1$ and $x_2$, therefore we must solve this system,
$$\hspace{-3mm}a_{11}-b_{22}=0,\hspace{8mm} b_{12}+c_{11}=0,\hspace{8mm}a_{12}+c_{22}=0,$$\vspace{-6mm}
$$\hspace{-3mm}a_{1}c_{2}+a_{2}b_{2}-a_{2}c_{1}-{c_{2}}^{2}+2\,c_{}a_
{12}+b_{}a_{22}-a_{}c_{12}=0,$$\vspace{-7.25mm}
\begin{eqnarray}
\hspace{-9mm}a_{2}b_{1}-c_{1}c_{2}+c_{}a_{11}-a_{}c_{11}-c_{}c_{12}-b_{}c_{22}=0,
\end{eqnarray}\vspace{-6mm}
 $$\hspace{-4mm}a_{1}b_{1}-b_{1}c_{2}+b_{2}c_{1}-{c_{1}}^{2}+a_{}b_{11}+2\,c_{}b_{12}-b_{}c_{12}=0.$$
This work is organized as follows. In section 2, some preliminary results about Lie symmetry method is presented. In section 3, the infinitesimal generators of symmetry algebra of system (2) are determined and some results obtained. In section 4, the optimal system of sub-algebras is constructed. In section 5, we obtain the invariant solutions corresponding to the infinitesimal  symmetries of system (2).
\section{Method of Lie symmetries}
In this section, we will perform the procedure of determining symmetries of a system of PDEs (see \cite{Olv1} and \cite{Nad1}).
To begin, we consider a general case for a system of PDE of order $n$th with $p$ independent and $q$ dependent variables such as:
\begin{eqnarray}
\Delta_{\mu}(x,u^{(n)})=0,    \ \ \ \ \ \ \ \ \          \mu=1,...,r,
\end{eqnarray}
involving $x=(x^{1},...,x^{p})$ and $u=(u^{1},...,u^{q})$ as independent and dependent variables respectively and all the derivatives of $u$ with respect to $x$ from $0$ to $n$.
We consider a one parameter Lie group of infinitesimal transformations who act on both dependent and independent variables of (3):
\begin{eqnarray}
\tilde {x}^{i}=x^i+\varepsilon\xi^i(x,u)+o(\varepsilon^2),\ \ \ \ \ \ i=1,...,p,\\
\tilde {u}^{j}=u^j+\varepsilon\phi_j(x,u)+o(\varepsilon^2),\ \ \ \ \ j=1,...,q,\nonumber
\end{eqnarray}
where $\xi^i$ and $\phi^j$ are the infinitesimals of the transformations for the independent and dependent variables, respectively.
The general form of infinitesimal generator associated with the above group of transformations is
\begin{eqnarray}
X = \sum\limits_{i = 1}^p {\xi ^i (x ,u )
{{\partial_{ x^i} }}}  + \sum\limits_{j = 1}^q {\phi_j  (x ,u )
{{\partial_{u^j}}}}
\end{eqnarray}
Transformations which maps solutions of a differential equation to other solutions are called symmetries of this equation. We define the $n$th order prolongation of the infinitesimal generator $X$ by
\begin{eqnarray}
Pr^{(n)}X=X+\sum\limits_{\alpha= 1}^q\sum\limits_{J}{\phi}^{J}_{\alpha}(x,u^{(n)})\partial_{u^{\alpha}_{J}}
\end{eqnarray}
where $J=(j_{1},...,j_{k})$, $1\leq j_k\leq p$, $1\leq k\leq n$ and the sum is over all $J$'s of order $0<\#J\leq n$. If $\#J=k$, the coefficient ${\phi}^{J}_{\alpha}$ of $\partial_{u^{\alpha}_J}$ will only depend on $k$-th and lower order derivatives of $u$, and
\begin{eqnarray}
{\phi}^{J}_{\alpha}(x,u^{(n)})=D_J(\phi_{\alpha}-\sum\limits_{i= 1}^p\xi^{i}u^{\alpha}_{i})+\sum\limits_{i=1}^p\xi^{i}u^{\alpha}_{J,i}
\end{eqnarray}
where $u^{\alpha}_{i}:=\frac{\partial u^{\alpha}}{\partial x^{i}}$ and $u^{\alpha}_{J,i}:=\frac{\partial u^{\alpha}_{J}}{\partial x^{i}}$.
Now, according to theorem 2.36 of \cite{Olv1}, the invariance of the system (3) under the infinitesimal transformations leads to the invariance conditions:
 \begin{eqnarray}
Pr^{(n)}X[\Delta_{\mu}(x,u^{(n)})]=0,    \ \ \   \mu=1,...,r, \  \ \mathrm{whenever} \  \  \Delta_{\mu}(x,u^{(n)})=0.
\end{eqnarray}
It is important that the infinitesimal symmetries form a Lie algebra under the Lie bracket.
\section{Symmetries of system (2)}
In this section, we consider one parameter Lie group of infinitesimal transformations on ($x^{1}=x, x^{2}=t, u^{1}=a, u^{2}=b, u^{3}=c$, we use $x$ and $t$ instead of $x_{1}$ and $x_{2}$ respectively in order not to use index),
\begin{eqnarray}
\nonumber
\tilde{x}\ =\ x+\varepsilon\xi^1(x,t,a,b,c)+o(\varepsilon^2),\\\nonumber \tilde{t}\ =\ t+\varepsilon\xi^2(x,t,a,b,c)+o(\varepsilon^2),\hspace{1mm}\\\tilde{a}\ =\ a+\varepsilon\phi_1(x,t,a,b,c)+o(\varepsilon^2),\hspace{-.2mm}
\\\tilde{b}\ =\ b+\varepsilon\phi_2(x,t,a,b,c)+o(\varepsilon^2),\nonumber\\\tilde{c}\ =\ c+\varepsilon\phi_3(x,t,a,b,c)+o(\varepsilon^2).\nonumber\hspace{.7mm}
\end{eqnarray}
The associated symmetry generator is of the form:
\begin{eqnarray}
\nonumber
X=\xi^1(x,t,a,b,c)\partial_x+\xi^2(x,t,a,b,c)\partial_t+\hspace{2.5cm}\\
\phi_1(x,t,a,b,c)\partial_a+\phi_2(x,t,a,b,c)\partial_b+\phi_3(x,t,a,b,c)\partial_c.\hspace{-2.9mm}
\end{eqnarray}
and, it's second prolongation is the vector field:
\begin{eqnarray}
\nonumber
Pr^{(2)}X = X+\phi_1^x\partial_{a_x}+\phi_1^t\partial_{a_t}+\phi_2^x\partial_{b_x}+\phi_2^t\partial_{b_t}+\phi_3^x\partial_{c_x}+
\phi_3^t\partial_{c_t}+\\\phi_1^{xx}\partial_{a_{xx}}+\phi_2^{xx}\partial_{b_{xx}}+\phi_3^{xx}\partial_{c_{xx}}+\phi_1^{xt}\partial_{a_{xt}}+\phi_2^{xt}\partial_{b_{xt}}+\hspace{6mm}\\\phi_3^{xt}\partial_{c_{xt}}+\phi_1^{tt}\partial_{a_{tt}}+\phi_2^{tt}\partial_{b_{tt}}+\phi_3^{tt}\partial_{c_{tt}}.\hspace{32mm}\nonumber
\end{eqnarray}
Let (for convenience) $Q_1=\phi_1-\xi^1 a_x-\xi^2 a_t$, $Q_2=\phi_2-\xi^1b_x-\xi^2b_t$ and $Q_3=\phi_3-\xi^1c_x-\xi^2c_t$ then by using (7) we can compute the coefficients of (11):
\begin{eqnarray*}
\begin{array}{lclclclclcl}
\phi_1^x&=&D_xQ_1+\xi^1a_{xx}+\xi^2a_{xt},&&\phi_2^x&=&D_xQ_2+\xi^1b_{xx}+\xi^2b_{xt},\vspace*{1mm}\\
\phi_3^x&=&D_xQ_3+\xi^1c_{xx}+\xi^2c_{xt},&&\phi_1^t&=&D_tQ_1+\xi^1a_{xt}+\xi^2a_{tt},\vspace*{1mm}\\
\phi_2^t&=&D_tQ_2+\xi^1b_{xt}+\xi^2b_{tt},&&\phi_3^t&=&D_tQ_3+\xi^1c_{xt}+\xi^2c_{tt},\vspace*{1mm}\\
\phi_1^{xx}&=&D_x^2Q_1+\xi^1a_{xxx}+\xi^2a_{xxt},&&\phi_2^{xx}&=&D_x^2Q_2+\xi^1b_{xxx}+\xi^2b_{xxt},\vspace*{1mm}\\
\phi_3^{xx}&=&D_x^2Q_3+\xi^1c_{xxx}+\xi^2c_{xxt},&&\phi_1^{tt}&=&D_t^2Q_1+\xi^1a_{xtt}+\xi^2a_{ttt},\vspace*{1mm}\\
\phi_2^{tt}&=&D_t^2Q_2+\xi^1b_{xtt}+\xi^2b_{ttt},&&\phi_3^{tt}&=&D_t^2Q_3+\xi^1c_{xtt}+\xi^2c_{ttt},\vspace*{1mm}\\
\phi_1^{xt}&=&D_xD_tQ_1+\xi^1a_{xxt}+\xi^2a_{xtt},&&\phi_2^{xt}&=&D_xD_tQ_2+\xi^1b_{xxt}+\xi^2b_{xtt},\vspace*{1mm}\\
\phi_3^{xt}&=&D_xD_tQ_3+\xi^1c_{xxt}+\xi^2c_{xtt}.
\end{array}
\end{eqnarray*}
where $D_x$ and $D_t$ are the total derivatives with respect to $x$ and $t$ respectively. By (8) the invariance condition is equivalent with the following equations:
\begin{eqnarray*}
\begin{array}{lclcl}
\hspace*{-20mm}Pr^{(2)}X[a_{xx}-b_{tt}]=0,&&\ \ \ \ \ \ \ \ \ \ \ \ && a_{xx}-b_{tt}=0,\\
\hspace*{-20mm}Pr^{(2)}X[b_{xy}+c_{xx}]=0,&&\ \ \ \ \ \ \ \ \ \ \ \ \ &&b_{xy}+c_{xx}=0,\\
&&\hspace{10mm}\vdots\vspace{-7mm}
\end{array}
\end{eqnarray*}
\begin{eqnarray*}
\begin{array}{lclcl}
Pr^{(2)}X[a_{x}b_{x}-b_{x}c_{t}+b_{t}c_{x}-{c_{x}}^{2}+a_{}b_{xx}+2\,c_{}b_{xt}-b_{}c_{xt}]=0,\ \ \ \\\hspace*{25mm} a_{x}b_{x}-b_{x}c_{t}+b_{t}c_{x}-{c_{x}}^{2}+a_{}b_{xx}+2\,c_{}b_{xt}-b_{}c_{xt}=0.\
\end{array}
\end{eqnarray*}
After substituting $Pr^{(2)}X$, with coefficients, in the six equations above, we obtain six polynomial equations involving the various derivatives of $a$, $b$ and $c$, whose coefficients are certain derivatives of $\xi^1$, $\xi^2$, $\phi_1$, $\phi_2$ and $\phi_3$. Since, $\xi^1$, $\xi^2$, $\phi_1$, $\phi_2$ and $\phi_3$ depend only on $x$, $t$, $a$, $b$, $c$ we can equate the individual coefficients to zero, leading to the determining equations:
\begin{eqnarray*}
\begin{array}{lclcl}
a^3\xi^{1}_{a}=0, \ \ \ b^2\xi^{1}_{b}=0, \  \ \ a\xi^{2}_{b}=0, \ \ \
b^2\xi^{2}_{b}=0, \ \ \  ac\phi_{{2}_{a}}=0, \ \ \ cb\xi^{1}_{b}=0, \hdots\vspace*{3mm}\\
abc(-3\xi^{1}_t-\phi_{3_b})+2c^2a(-\xi^{2}_t+\xi^{1}_x)+ba^2(\phi_{2_b}-\phi_{3_c}-\xi^{1}_x+\xi^{2}_t)+\vspace*{1mm}\\
\hspace{4mm}ca^2(-2\xi^{2}_x+\phi_{{2}_c})+\phi_1(ab-2c^2)-a^2\phi_2+2ca\phi_3+4c^3\xi^{1}_t=0.
\end{array}
\end{eqnarray*}
We write some of these equation whereas the number of the whole is 410. By solving this system of PDEs, we can prove:
\begin{theorem}
The Lie group of point symmetries of system (2) has a Lie algebra generated by (10), whose coefficient functions are:
\begin{eqnarray}
\begin{array}{lclcl}
\phi_{1}=2{k_1}c+{k_7}\,a_{{}},&&\xi_{{1}} ={k_3}{x}+{k_1}{ t}+{k_2},\\

\phi_{{2}}=(2{k_4}-2
{k_3}+{k_7}) b_{{}}+2{k_6}\,c_{{}},&&\xi_{{2}}  ={k_6}{x}+{k_4}{t}+{k_5}.\\
\phi_{{3}} =( {k_7}+{k_4}-{k_3} ) c_{{}}+{k_6}a_{{}}+{k_1}b_{{}},
\end{array}
\end{eqnarray}
where $k_i$, $i=1,...,7$ are arbitrary constants.
\end{theorem}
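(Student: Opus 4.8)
The plan is to execute the Lie symmetry algorithm of Section~2 for the concrete system (2). First I would apply the second prolongation $Pr^{(2)}X$, whose coefficients $\phi_i^x,\phi_i^t,\phi_i^{xx},\phi_i^{xt},\phi_i^{tt}$ are supplied by formula (7) and listed explicitly above, to each of the six defining equations of (2). Because the invariance condition (8) is only required to hold on the solution variety, I would first use the three linear second-order equations $a_{xx}=b_{tt}$, $c_{xx}=-b_{xt}$, $c_{tt}=-a_{xt}$, together with all their differential consequences, to eliminate the redundant jet coordinates, rewriting the six prolonged expressions in terms of a fixed independent set of derivatives of $a$, $b$, $c$.

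Each prolonged equation then becomes a polynomial identity in these independent jet variables whose coefficients are differential expressions in $\xi^1,\xi^2,\phi_1,\phi_2,\phi_3$ and in $a,b,c$. Since $\xi^1,\xi^2,\phi_1,\phi_2,\phi_3$ depend only on $(x,t,a,b,c)$, I would equate to zero the coefficient of each monomial in the jet variables, producing the overdetermined linear system of determining equations mentioned in the text (the $410$ equations). Solving it proceeds hierarchically: the simplest equations, of the type $a^3\xi^1_a=0$, $b^2\xi^1_b=0$, $a\xi^2_b=0$, $cb\xi^1_b=0$, and so on, immediately force $\xi^1$ and $\xi^2$ to be independent of $a$, $b$, $c$; the next batch forces them to be affine in $(x,t)$, yielding $\xi^1=k_3x+k_1t+k_2$ and $\xi^2=k_6x+k_4t+k_5$. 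Substituting this back, the equations involving $\phi_1,\phi_2,\phi_3$ collapse to linear relations whose general solution is exactly the family displayed in (12), with the seven free constants $k_1,\dots,k_7$. To finish I would check that (12) satisfies every determining equation, not merely the subset used in the derivation, and observe, as guaranteed by the closure statement at the end of Section~2, that the seven resulting vector fields span a Lie algebra; one can also sanity-check individual generators, e.g. $k_2$ and $k_5$ give the translations $\partial_x$ and $\partial_t$, while $k_7$ gives the dilation $a\partial_a+b\partial_b+c\partial_c$.

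The main obstacle is the size and bookkeeping of the determining system rather than any single deep step: with several hundred equations the elimination must be organized carefully (in practice with the aid of a computer-algebra symmetry package), and the delicate point is ensuring that, when restricting to the solution variety, all differential consequences of the three second-order constraints have been incorporated. An incomplete reduction would either manufacture spurious symmetries or drop genuine constraints, so the consistency of the final seven-parameter answer with the full set of $410$ equations is the real certificate of correctness.
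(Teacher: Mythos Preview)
Your proposal is correct and follows essentially the same route as the paper: apply $Pr^{(2)}X$ to each of the six equations of (2), restrict to the solution variety, split on the independent jet variables to obtain the large overdetermined determining system (the paper also counts $410$ equations), and solve it hierarchically to arrive at the seven-parameter family (12). Your write-up is in fact more explicit than the paper's about the elimination of redundant jet coordinates via the differential consequences of the three linear equations and about the order in which the determining equations are solved, but the underlying method is identical.
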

\begin{cor}
Each one-parameter Lie group of point symmetries of (2) has following     infinitesimal generators:
 \begin{eqnarray}
\begin{array}{lclcl}
X_1=\partial_x,&&X_2=\partial_t,\ \ \ \ \ \ \ \ \ \ \ \ \ \ X_3=x\partial_x-2b\partial_b-c\partial_c,\\
 X_4=x\partial_t+2c\partial_b+a\partial_c,&&X_5=t\partial_x+2c\partial_a+b\partial_c\\X_6=t\partial_t+2b\partial_b+c\partial_c,&&X_7=a\partial_a+b\partial_b+c\partial_c.
\end{array}
\end{eqnarray}
The commutator table of Lie algebra for (2) is given below, where the entry in the $i^{th}$ row and $j^{th}$ column is $[X_{i},X_{j}]=X_{i}X_{j}-X_{j}X_{i}$, $i,j=1,...,7$.
\end{cor}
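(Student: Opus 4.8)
The plan is to deduce everything from Theorem~3.1; granted that theorem (whose proof is the laborious part, since it amounts to solving the $410$ determining equations), the Corollary is essentially bookkeeping.

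First I would extract the generators (13). By Theorem~3.1 the generic infinitesimal symmetry of (2) is the vector field $X$ of the form (10) whose five coefficient functions are the explicit expressions in (12), and those expressions are $\mathbb{R}$-linear in the seven free constants $k_1,\dots,k_7$. Hence the symmetry algebra equals the linear span of the seven vector fields obtained by setting, in turn, one $k_i$ equal to $1$ and all others to $0$. Carrying out the seven substitutions, $k_2$ yields $\partial_x$, $k_5$ yields $\partial_t$, $k_3$ yields $x\partial_x-2b\partial_b-c\partial_c$, $k_6$ yields $x\partial_t+2c\partial_b+a\partial_c$, $k_1$ yields $t\partial_x+2c\partial_a+b\partial_c$, $k_4$ yields $t\partial_t+2b\partial_b+c\partial_c$, and $k_7$ yields $a\partial_a+b\partial_b+c\partial_c$; renaming these $X_1,\dots,X_7$ gives (13). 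These seven fields are manifestly linearly independent (inspect their $\partial_x$-, $\partial_t$- and $\partial_a$-components), so they form a basis of the symmetry algebra and every one-parameter subgroup of point symmetries of (2) is generated by some $\sum_i c_iX_i$.

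Next I would compute the commutator table, i.e.\ the $\binom{7}{2}=21$ brackets $[X_i,X_j]$, from the coordinate formula $[V,W]=\sum_k\bigl(V(W^k)-W(V^k)\bigr)\partial_k$ on the five coordinates $(x,t,a,b,c)$, remembering that here $a,b,c$ are independent coordinates so that $\partial_x$ and $\partial_t$ annihilate them. For instance $[X_1,X_3]=[\partial_x,\,x\partial_x-2b\partial_b-c\partial_c]=\partial_x=X_1$ and $[X_1,X_4]=[\partial_x,\,x\partial_t+2c\partial_b+a\partial_c]=\partial_t=X_2$; the three ``weight'' operators $X_3,X_6,X_7$ commute pairwise; and a short computation gives, e.g., $[X_4,X_5]=x\partial_x-t\partial_t+2a\partial_a-2b\partial_b=X_3-X_6+2X_7$. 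Listing all $21$ outcomes produces the asserted table. As internal checks, each bracket must again lie in $\mathrm{span}\{X_1,\dots,X_7\}$ (consistent with the general fact recalled at the end of Section~2 that infinitesimal symmetries close under the Lie bracket), and the Jacobi identity should hold on sample triples.

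The only real pitfall is the routine but error-prone evaluation of these brackets: one must be scrupulous about exactly which partial-derivative operators occur in each $X_i$, since silently dropping or inserting a $\partial_b$ or $\partial_c$ term corrupts the answer without any visible warning. Organising the seven generators into the translations $\{X_1,X_2\}$, the scalings $\{X_3,X_6,X_7\}$ and the two ``mixing'' fields $\{X_4,X_5\}$ keeps the computation structured and makes the resulting table easy to cross-check.
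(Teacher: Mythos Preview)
Your proposal is correct and follows exactly the approach implicit in the paper: the seven generators are read off from Theorem~3.1 by specialising the constants $k_i$ in (12) one at a time, and the commutator table is then obtained by direct bracket computations (your sample $[X_4,X_5]=X_3-X_6+2X_7$ agrees with the paper's table). The paper offers no further argument beyond this, so there is nothing to add.
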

\begin{table}[h]
\begin{center}
\begin{tabular}{c|ccccccc}
\cline{1-8}
${[\,,\,]}$ & $X_1$  & $X_2$    & $X_3$    & $X_4$  & $X_5$ & $X_6$ & $X_7$  \\
\cline{1-8}
$X_1$ & $0$      & $0$    & $X_1$  & $X_2$    & $0$  & $0$ & $0$  \\
$X_2$ & $0$      & $0$    & $0$      & $0$       & $X_1$   & $X_2$ & $0$\\
$X_3$ & $-X_1$ & $0$     & $0$     & $X_4$    & $-X_5$ & $0$ & $0$\\
$X_4$ & $-X_2$ & $0$     & $-X_4$ & $0$    & $X_3-X_6+2X_7$ & $X_4$ & $0$    \\
$X_5$ & $0$      & $-X_1$& $X_5$  & $-X_3+X_6-2X_7$ & $0$&$-X_5$&$0$   \\
$X_6$ & $0$      & $-X_2$& $0$     & $-X_4$   & $X_5$ & $0$ & $0$ \\
$X_7$ & $0$      & $0$     & $0$     & $0$       & $0$  & $0$ & $0$\\
\cline{1-8}
\end{tabular} \\[5mm]
\end{center}
\end{table}
 The group transformation which is generated by $
X_i=\xi^1_i\partial_x+\xi^2_i\partial_t+
\phi_{1_i}\partial_a+\phi_{2_i}\partial_b+\phi_{3_i}\partial_c$ for $i=1,...,7$ is obtained by solving the seven systems of ODEs:
 \begin{eqnarray}
\begin{array}{lclcl}
 \frac{d\tilde{x}(s)}{ds}=\xi^1_i(\tilde{x}(s),\tilde{t}(s),\tilde{a}(s),\tilde{b}(s),\tilde{c}(s)),& \tilde{x}(0)=x,\vspace*{1mm}\\
 \frac{d\tilde{t}(s)}{ds}=\xi^2_i(\tilde{x}(s),\tilde{t}(s),\tilde{a}(s),\tilde{b}(s),\tilde{c}(s)),& \tilde{t}(0)=t,\vspace*{1mm}\\
 \frac{d\tilde{a}(s)}{ds}=\phi_{1_i}(\tilde{x}(s),\tilde{t}(s),\tilde{a}(s),\tilde{b}(s),\tilde{c}(s)),& \tilde{a}(0)=a,&&i=1,...,7\vspace*{1mm}\\
 \frac{d\tilde{b}(s)}{ds}=\phi_{2_i}(\tilde{x}(s),\tilde{t}(s),\tilde{a}(s),\tilde{b}(s),\tilde{c}(s)),& \tilde{b}(0)=b,\vspace*{1mm}\\
 \frac{d\tilde{c}(s)}{ds}=\phi_{3_i}(\tilde{x}(s),\tilde{t}(s),\tilde{a}(s),\tilde{b}(s),\tilde{c}(s)),& \tilde{c}(0)=c,
\end{array}
\end{eqnarray}
Exponentiating the infinitesimal symmetries (13), we obtain the one-parameter groups $g_k(s)$ generated by $X_k$, $k=1,...,7$:
 \begin{eqnarray}
\begin{array}{lclcl}
g_1(s)&:&(x,t,a,b,c)&\longmapsto&(x+s,t,a,b,c),\\ g_2(s)&:&(x,t,a,b,c)&\longmapsto&(x,t+s,a,b,c),\\ g_3(s)&:&(x,t,a,b,c)&\longmapsto&(xe^s,t,a,be^{-2s},ce^{-s}),\\ g_4(s)&:&(x,t,a,b,c)&\longmapsto&(x,sx+t,a,as^2+2cs+b,as+c),\\ g_5(s)&:&(x,t,a,b,c)&\longmapsto&(ts+x,t,bs^2+2cs+a,b,bs+c),\\ g_6(s)&:&(x,t,a,b,c)&\longmapsto&(x,te^s,a,be^{2s},ce^{s}),\\ g_7(s)&:&(x,t,a,b,c)&\longmapsto&(x,t,ae^s,be^s,ce^s),
\end{array}
\end{eqnarray}
Consequently, we can state the following theorem:
\begin{theorem}
If $a=f=f(x,t)$, $b=h=h(x,t)$ and $c=k=k(x,t)$ is a solution of (2), so are functions
 \begin{eqnarray*}
\begin{array}{lclcl}
g_1(s).f=f(x-s,t),&&g_1(s).h=h(x-s,t),&&\hspace*{-3mm}g_1(s).k=k(x-s,t),\\
g_2(s).f=f(x,t-s),&&g_2(s).h=h(x,t-s),&&\hspace*{-3mm}g_2(s).k=k(x,t-s),\\
g_3(s).f=f(xe^{-s},t),&&g_3(s).h=h(xe^{-s},t)e^{-2s},&&\hspace*{-3mm}g_3(s).k=k(xe^{-s},t)e^{-s},\\g_6(s).f=f(x,te^{-s}),&&g_6(s).h=h(x,te^{-s})e^{2s},&&\hspace*{-3mm}g_6(s).k=k(x,te^{-s})e^{s},\\g_7(s).f=f(x,t)e^{s},&&g_7(s).h=h(x,t)e^{s},&&\hspace*{-3mm}g_7(s).k=k(x,t)e^{s},\vspace{-3mm}
 \end{array}\end{eqnarray*}
 \begin{eqnarray*}
\begin{array}{lclcl}
\hspace{-27mm}g_4(s).f=f(x,t-sx),\\
\hspace{-27mm}g_4(s).h=f(x,t-sx)s^2+2k(x,t-sx)s+h(x,t-sx),\\
\hspace{-27mm}g_4(s).k=f(x,t-sx)s+k(x,t-sx),\vspace{-3mm}
 \end{array}\end{eqnarray*}
 \begin{eqnarray*}
\begin{array}{lclcl}
\hspace{-28.4mm}g_5(s).f=h(x-st,t)s^2+2k(x-st,t)s+f(x-st,t),\\
\hspace{-28.4mm}g_5(s).h=h(x-st,t),\\
\hspace{-28.4mm}g_5(s).k=h(x-st,t)s+k(x-st,t).
 \end{array}\end{eqnarray*}

\end{theorem}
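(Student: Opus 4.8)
The approach is to derive the statement from the general principle that a one-parameter symmetry group of a system of PDEs carries solutions to solutions, and then to make the induced action on the triple $(a,b,c)$ explicit by an elementary change of variables. No direct re-substitution into system (2) is needed once the symmetry property is invoked.

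First I would recall that, by Theorem~3.1 and Corollary~3.2, each vector field $X_k$ ($k=1,\dots,7$) is an infinitesimal symmetry of (2), so the flow $g_k(s)$ it generates is a local one-parameter symmetry group, with the explicit form written out above in (16). By the fundamental property of symmetry groups (see \cite{Olv1}), if $a=f(x,t)$, $b=h(x,t)$, $c=k(x,t)$ solve (2), then the image under $g_k(s)$ of the graph
$$\Gamma=\{(x,t,a,b,c):\ a=f(x,t),\ b=h(x,t),\ c=k(x,t)\}$$
is again, for $s$ near $0$, the graph of a smooth solution $\bigl(g_k(s).f,\ g_k(s).h,\ g_k(s).k\bigr)$ of (2). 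This already proves the assertion; what remains is to identify these transformed functions.

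Next, for each $k$ I would compute $g_k(s)\cdot\Gamma$ concretely. Writing $g_k(s):(x,t,a,b,c)\mapsto(\tilde x,\tilde t,\tilde a,\tilde b,\tilde c)$ as in (16), I substitute $a=f(x,t)$, $b=h(x,t)$, $c=k(x,t)$, invert the first two components to express $(x,t)$ in terms of $(\tilde x,\tilde t)$ — a translation for $g_1,g_2,g_7$, a dilation for $g_3,g_6$, and a shear ($\tilde t=sx+t$, respectively $\tilde x=st+x$) for $g_4,g_5$, all elementary to invert — and insert the result into the remaining three components. Relabelling $(\tilde x,\tilde t)$ back to $(x,t)$ gives exactly the formulas in the statement; for instance, for $g_4$ one has $x=\tilde x$ and $t=\tilde t-s\tilde x$, whence $\tilde a=f(\tilde x,\tilde t-s\tilde x)$, $\tilde c=s\,f+k$ and $\tilde b=s^2 f+2s\,k+h$, all evaluated at $(\tilde x,\tilde t-s\tilde x)$, and the computation for $g_5$ is identical after interchanging the two independent variables and the roles of $a$ and $b$.

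There is no genuine obstacle in this argument: the only step requiring a little care is the bookkeeping of this substitution–inversion for $g_4$ and $g_5$, where dependent and independent variables get mixed, so that one must be careful to express the arguments of $f,h,k$ in the new coordinates before reading off $\tilde a,\tilde b,\tilde c$. That the resulting triples solve (2) does not require any computation with the six PDEs themselves, since it is guaranteed by the symmetry property of $g_k(s)$ established in Theorem~3.1.
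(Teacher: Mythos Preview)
Your proposal is correct and follows the same route as the paper: the paper does not give a separate proof of this theorem but presents it with the word ``Consequently'' as an immediate consequence of the explicit flow formulas (15), relying implicitly on the standard fact from \cite{Olv1} that each one-parameter symmetry group carries solutions to solutions. Your write-up simply makes explicit the graph-transformation and inversion step that the paper leaves tacit, so there is no substantive difference in approach.
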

This theorem is applied to obtain new solutions from known ones.
\begin{exam}
If we let
\begin{eqnarray*}
\begin{array}{lclcl}
a=f(x,t)=r_1x+r_2,&&c=k(x,t)=r_3x+r_4,\\
b=h(x,t)=\frac{r^2_3}{r_1}x-\frac{r_2r^2_3}{r^2_1}\ln(r_1x+r_2).
 \end{array}\end{eqnarray*}
be a simple solution of (2), where $r_1,r_2,r_3,r_4\in\mathbb{R}$ are arbitrary constants, we conclude that the functions $g_i(s).f(x,t), g_i(s).h(x,t)$ and $ g_i(s).k(x,t)$ are also solutions of (2) for $i=1,...,7$. for example
\begin{eqnarray*}
\begin{array}{lclcl}
g_5(s).f(x,t)=\vspace*{.5mm}\\
(\frac{r^2_3}{r_1}(x-ts)-\frac{r_2r^2_3}{r^2_1}\ln(r_1(x-ts)+r_2))s^2+2(r_3(x-ts)+r_4)s+r_1(x-ts)+r_2\vspace*{1mm}\\
g_5(s).h(x,t)=\frac{r^2_3}{r_1}(x-ts)-\frac{r_2r^2_3}{r^2_1}\ln(r_1(x-st)+r_2)\\
g_5(s).k(x,t)=(\frac{r^2_3}{r_1}(x-ts)-\frac{r_2r^2_3}{r^2_1}\ln(r_1(x-ts)+r_2))s+r_3(x-ts)+r_4
 \end{array}\end{eqnarray*}
is a set of new solutions  of (2) and we can obtain many other solutions by arbitrary combination of $g_i(s)$'s for $i=1,...,7$.
So we obtain infinite number of Einstein Walker manifolds just from this example.
\end{exam}
\section{One-dimensional optimal system of (2)}
In this section, we obtain the one-parameter optimal system of (2) by using symmetry group. Since every linear combination of infinitesimal symmetries is an infinitesimal symmetry, there is an infinite number of one-dimensional subalgebras for the differential equation. So it's important to determine which subgroups give different types of solutions. For this, we must find invariant solutions which are not related by transformation in the full symmetry group, this led to concept of an optimal system of subalgebra. For one-dimensional subalgebras, this classification problem is the same as the problem of classifying the orbits of the adjoint representation \cite{Olv1}. This problem is solved by the simple approach of taking a general element in the lie algebra and subjecting it to various adjoint transformation so as to simplify it as much as possible (\cite{Ovs} and \cite{Nad2}).
Optimal set of subalgebras is obtaining from taking only one representative from each class of equivalent subalgebras.
Adjoint representation of each $X_i$, $i=1,...,7$ is defined as follow:
\begin{equation}
\mathrm{Ad}(\exp(s.X_i).X_j) =
X_j-s.[X_i,X_j]+\frac{s^2}{2}.[X_i, [X_i,X_j]]-\cdots,
\end{equation}
where $s$ is a parameter and  $[X_i,X_j]$  is defined in corollary (3.2) for $i,j=1,\cdots,7$ (\cite{Olv1},page 199). If we denote by  $\mathfrak{g}$, the Lie algebra that  generated by (13), then we can write the adjoint action for $\mathfrak{g}$, and show that
\begin{theorem}
A one-dimensional optimal system of (2) is given by
\begin{eqnarray*}
\begin{array}{lclcl}
1)\ X_7,&&12)\ X_2+X_3+aX_5+bX_6+cX_7,\\
2)\ X_1+aX_7,&&13)\ -X_2+X_3+aX_5+bX_6+cX_7,\\
3)\ X_2+aX_7,&&14)\ X_3+X_4+aX_5+bX_6+cX_7,\\
4)\ X_6+aX_7\,&&15)\ X_3-X_4+aX_5+bX_6+cX_7,\\
5)\ X_1+X_6+aX_7,&&16)\ X_1+X_4+aX_5+bX_6+cX_7,\\
6)\ -X_1+X_6+aX_7,&&17)\ -X_1+X_4+aX_5+bX_6+cX_7,\\
7)\ X_5+aX_6+bX_7,&&18)\ X_2+X_3+X_4+aX_5+bX_6+cX_7,\\
8)\ X_2+X_5+aX_6+bX_7,&&19)\ -X_2+X_3+X_4+aX_5+bX_6+cX_7,\\
9)\ -X_2+X_5+aX_6+bX_7,&&20)\ X_2+X_3-X_4+aX_5+bX_6+cX_7,\\
\hspace*{-2mm}10)\ X_4+aX_5+bX_6+cX_7,&&21)\ -X_2+X_3-X_4+aX_5+bX_6+cX_7.\\
\hspace*{-2mm}11)\ X_3+aX_5+bX_6+cX_7,&&
\end{array}
\end{eqnarray*}
where $a,b,c\in{\Bbb R}$ are arbitrary numbers.
\end{theorem}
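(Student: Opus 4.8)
The plan is to follow Olver's algorithm \cite{Olv1} for constructing a one--dimensional optimal system from the adjoint representation. The first step is to tabulate the adjoint action: for each generator $X_i$ one computes $\mathrm{Ad}(\exp(sX_i))$ acting on $X_1,\dots,X_7$ from the commutator table of Corollary~3.2 by summing the series that defines $\mathrm{Ad}$. Since $X_7$ is central and almost every iterated bracket in that table terminates after one or two steps, each $\mathrm{Ad}(\exp(sX_i))$ is a polynomial of degree at most two in $s$; only the entries inside the block $\langle X_3,X_4,X_5,X_6,X_7\rangle$ need attention, and there the relations $[H,X_4]=2X_4$, $[H,X_5]=-2X_5$, $[X_4,X_5]=H$ with $H:=X_3-X_6+2X_7$ exhibit this block as $\mathfrak{sl}_2(\mathbb R)\oplus\mathbb R X_7\oplus\mathbb R(X_3+X_6)$, while $\langle X_1,X_2\rangle$ is an abelian ideal carrying the standard $2$--dimensional $\mathfrak{sl}_2$--representation.

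With the adjoint table in hand, I would take a generic element $X=\sum_{i=1}^{7}a_iX_i$ and first locate the quantities that the full adjoint group leaves unchanged: in these coordinates one finds that $a_3+a_6$ (the component along the central direction of $\mathfrak g/\langle X_1,X_2,X_7\rangle$), the combination $a_7-a_3+a_6$, and the sign of the $\mathfrak{sl}_2$--Casimir $\tfrac14(a_3-a_6)^2+a_4a_5$ are invariant, and, together with the rank of the induced action on $\langle X_1,X_2\rangle$, these organize the case tree. The reduction then proceeds from the top: using $\mathrm{Ad}(\exp(sX_4))$, $\mathrm{Ad}(\exp(sX_5))$ and $\mathrm{Ad}(\exp(sX_3)),\mathrm{Ad}(\exp(sX_6))$ I would bring the $\mathfrak{sl}_2$--part to a canonical form according to whether the Casimir is positive, zero, or negative, and whether it vanishes altogether; then, with the residual stabilizer of that canonical form (a one--parameter torus, a one--parameter unipotent, or all of $\langle X_1,X_2,X_7\rangle$, as appropriate), I would act by $\mathrm{Ad}(\exp(sX_1)),\mathrm{Ad}(\exp(sX_2))$ and whatever flows remain to normalize $a_1,a_2$ --- occasionally forcing a split according to a sign --- and to rescale the surviving leading coefficients to $\pm1$. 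The coefficients that provably cannot be removed at the end are precisely the parameters $a,b,c$ in the tails $+aX_5+bX_6+cX_7$, and matching each branch of the tree against the statement yields the twenty-one families.

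The step I expect to be the genuine obstacle is keeping this case analysis simultaneously exhaustive and irredundant. Exhaustiveness only requires never dropping a vanishing-or-sign alternative, but irredundancy --- no two of the listed representatives lying on the same orbit --- is delicate, and I would discharge it pair by pair using the invariants above (the value of $a_3+a_6$, the sign of the Casimir, the value of $a_7-a_3+a_6$ once the leading term is fixed, the rank on $\langle X_1,X_2\rangle$), reverting to a direct check against the adjoint table for the pairs these do not immediately separate. The rest of the argument --- writing out the seven adjoint matrices explicitly and chasing the coefficients of $X=\sum a_iX_i$ through each branch of the tree --- is mechanical though long, so I would present a couple of representative branches in full and record the remaining branches in summary form.
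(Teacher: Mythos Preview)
Your approach is correct in spirit but genuinely different from the paper's. The paper does not identify any Lie-theoretic structure at all: it simply writes down the seven adjoint matrices $M_i^s$ with respect to the basis $\{X_1,\dots,X_7\}$, takes a generic $X=\sum a_iX_i$, and walks through a case split governed purely by which of the coefficients $a_3,a_4,a_5,a_6$ (and then $a_1,a_2$) vanish, in each branch choosing parameters $s_i$ to kill or normalize one coefficient at a time. There is no mention of $\mathfrak{sl}_2$, no Casimir, no invariant of the adjoint action, and no irredundancy check; the paper establishes only that every $X$ can be moved into one of the twenty-one families.

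Your route via the decomposition $\langle X_1,X_2\rangle\rtimes\bigl(\mathfrak{sl}_2\oplus\mathbb R(X_3+X_6)\oplus\mathbb R X_7\bigr)$ with $H=X_3-X_6+2X_7$, together with the invariants $a_3+a_6$, $a_7-a_3+a_6$ and the Casimir sign, is more conceptual and has the real advantage that it supplies the separating invariants needed for the irredundancy argument the paper omits. The price is that the canonical forms your $\mathfrak{sl}_2$-first reduction naturally produces need not be parametrized the same way as the list in the statement, so the ``matching each branch against the statement'' step is where the work actually lies; be prepared for the possibility that the stated list carries more free parameters than a strict optimal system would. One small correction: $\mathrm{Ad}(\exp(sX_3))$ and $\mathrm{Ad}(\exp(sX_6))$ are diagonal with entries $e^{\pm s}$, not polynomial in $s$, so your blanket claim that every adjoint map is polynomial of degree at most two is off for those two generators---harmless for the argument, but worth fixing.
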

\begin{proof}
Attending to table of corollary (3.2), we understand that the center of $\mathfrak{g}$ is the subalgebra $\langle X_7\rangle$, so it is enough to specify the subalgebras of $\langle X_1,X_2,X_3,X_4,X_5,X_6\rangle$.
$F^s_i:\mathfrak{g}\to \mathfrak{g}$ defined by
$X\mapsto\mathrm{Ad}(\exp(sX_i).X)$ is a linear map, for
$i=1,\cdots,7$. The matrix $M^s_i$ of $F^s_i$, $i=1,\cdots,7$, with
respect to basis $\{X_1,\cdots,X_7\}$ is:
\begin{eqnarray*}
&\hspace{-6mm} M_1^s\!=\!\!\!\left[ \begin {array}{ccccccc}
1&0&0&0&0&0&0\\0&1&0&0&0&0&0\\-s&0&1&0&0&0&0\\0&-s&0&1&0&0&0\\0&0&0&0&1&0&0\\0&0&0&0&0&1&0\\0&0&0&0&0&0&1
\end{array} \right]\!\!,%
\;
M_4^s\!=\!\!\!\left[ \begin {array}{ccccccc}
1&s&0&0&0&0&0\\0&1&0
&0&0&0&0\\0&0&1&s&0&0&0\\0&0&0&1
&0&0&0\\0&0&-s&-{s}^{2}&1&s&-2s
\\0&0&0&-s&0&1&0\\0&0&0&0&0&0&1
\end {array} \right]\!\!,& \nonumber
\\
&\hspace{-5mm} M_2^s\!=\!\!\!\left[ \begin {array}{ccccccc}
1&0&0&0&0&0&0\\0&1&0
&0&0&0&0\\0&0&1&0&0&0&0\\0&0&0&1
&0&0&0\\-s&0&0&0&1&0&0\\0&-s&0&0
&0&1&0\\0&0&0&0&0&0&1
\end{array} \right]\!\!,%
\;
M_5^s\!=\!\!\!\left[ \begin {array}{ccccccc}
1&0&0&0&0&0&0\\s&1&0
&0&0&0&0\\0&0&1&0&-s&0&0\\0&0&s&
1&-{s}^{2}&-s&2\,s\\0&0&0&0&1&0&0
\\0&0&0&0&s&1&0\\0&0&0&0&0&0&1
\end{array} \right]\!\!,&
\\
&\hspace{-5mm} M_3^s\!=\!\!\!\left[ \begin {array}{ccccccc}
{{\rm e}^{s}}&0&0&0&0&0&0
\\0&1&0&0&0&0&0\\0&0&1&0&0&0&0
\\0&0&0&{{\rm e}^{-s}}&0&0&0\\0&0
&0&0&{{\rm e}^{s}}&0&0\\0&0&0&0&0&1&0
\\0&0&0&0&0&0&1
\end{array} \right]\!\!,%
\;
M_6^s\!=\!\!\!\left[ \begin {array}{ccccccc}
1&0&0&0&0&0&0\\0&{
{\rm e}^{s}}&0&0&0&0&0\\0&0&1&0&0&0&0
\\0&0&0&{{\rm e}^{s}}&0&0&0\\0&0
&0&0&{{\rm e}^{-s}}&0&0\\0&0&0&0&0&1&0
\\ 0&0&0&0&0&0&1
\end{array} \right]\!\!.&\nonumber
\end{eqnarray*}
respectively and $M^s_7$ is the $7\times 7$ identity matrix. Let
$X=\sum_{i=1}^7a_iX_i$, then
\begin{eqnarray*}
 \hspace{-20mm}F^{s_7}_7\circ \cdots \circ F^{s_1}_1 \ : \ X \mapsto \ ((1+s_5s_4)e^{s_3}a_1+e^{s_6+s_3}s_4a_2).X_1+\cdots \hspace{20mm}\\
 \hspace{20mm}+(-s_5s_2a_1-e^{s_6}s_2a_2+\cdots+(1+s_5s_4)a_6-2s_5s_4a_7)X_6+a_7X_7.\vspace*{2mm}
\end{eqnarray*}

Now, we can simplify $X$ as follows: \vspace*{1mm}

If $a_1=a_2=\cdots=a_6=0$, then $X$ is reduced to the Case of (1), that is center of $\mathfrak{g}$.\vspace*{1mm}

If $a_3=\cdots=a_6=0$ and $a_1\neq0$, then we can make the coefficient of
$X_2$ vanish by $F^{s_4}_4 $; By setting $s_4=a_2/a_1$. Scaling $X$ if necessary, we can assume that $a_1=1$. So, $X$ is reduced to the Case of (2).\vspace*{1mm}

If $a_1=a_3=\cdots=a_6=0$, then we can make $a_2=\pm 1$ by $F^{s_6}_6 $; By setting $s_6=\ln{\left| {a_2} \right|}$. So, $X$ is reduced to  Case (3).
\vspace*{1mm}

If $a_1=a_3=\cdots=a_5=0$ and $a_6\neq0$, then we can make the coefficient of
$X_2$ vanish by $F^{s_2}_2 $; By setting $s_2=-a_2/a_6$. Scaling $X$ if necessary, we can assume that $a_6=1$. So, $X$ is reduced to the Case of (4).\vspace*{1mm}

If $a_3=\cdots=a_5=0$ and $a_6\neq0$, then we can make the coefficient of
$X_2$ vanish by $F^{s_2}_2 $; By setting $s_2=-a_2/a_6$. Scaling $X$ if necessary, we can assume that $a_6=1$. then, we can make $a_1=\pm 1$, by $F^{s_3}_3$; By setting  $s_3=\ln{\left| {a_1} \right|}$. So, $X$ is reduced to the Cases of (5) and (6).\vspace*{1mm}

If $a_2=\cdots=a_4=0$ and $a_5\neq0$, then we can make the coefficient of
$X_1$ vanish by $F^{s_2}_2 $; By setting $s_2=-a_1$. Scaling $X$ if necessary, we can assume that $a_5=1$. So, $X$ is reduced to the Case of (7).\vspace*{1mm}

If $a_3=a_4=0$ and $a_5\neq0$, then we can make the coefficient of
$X_2$ vanish by $F^{s_2}_2 $; By setting $s_2=-a_1$. Scaling $X$ if necessary, we can assume that $a_5=1$. then we can make $a_2=\pm 1$, by $F^{s_6}_6$; By setting  $s_6=\ln{\left| {a_2} \right|}$. So, $X$ is reduced to the Cases of (8) and (9).\vspace*{1mm}

If $a_1=a_3=0$ and $a_4\neq0$, then we can make the coefficient of
$X_2$ vanish by $F^{s_1}_1 $; By setting $s_1=-a_2/a_4$. Scaling $X$ if necessary, we can assume that $a_4=1$. So, $X$ is reduced to the Case of (10).\vspace*{1mm}

If $a_2=a_4=0$ and $a_3\neq0$, then we can make the coefficient of
$X_1$ vanish by $F^{s_1}_1$; By setting $s_1=-a_1$. Scaling $X$ if necessary, we can assume that $a_3=1$. So, $X$ is reduced to the Case of (11).\vspace*{1mm}

If $a_4=0$ and $a_3\neq0$, then we can make the coefficient of
$X_1$ vanish by $F^{s_1}_1$; By setting $s_1=-a_1$. Scaling $X$ if necessary, we can assume that $a_3=1$. then we can make $a_2=\pm 1$ by $F^{s_3}_3$; By setting $s_3=-\ln{\left| {a_4} \right|}$ .So, $X$ is reduced to the Cases of (12) and (13).\vspace*{1mm}

If $a_2=0$ and $a_3\neq0$, then we can make the coefficient of
$X_1$ vanish by $F^{s_1}_1$; By setting $s_1=-a_1$. Scaling $X$ if necessary, we can assume that $a_3=1$. then we can make $a_4=\pm 1$ by $F^{s_6}_6$; By setting $s_6=-\ln{\left| {a_2} \right|}$ .So, $X$ is reduced to the Cases of (14) and (15).\vspace*{1mm}

If $a_3=0$ and $a_4\neq0$, then we can make the coefficient of
$X_2$ vanish by $F^{s_1}_1$; By setting $s_1=-a_2/a_4$. Scaling $X$ if necessary, we can assume that $a_4=1$. then we can make $a_1=\pm 1$ by $F^{s_3}_3$; By setting $s_3=\ln{\sqrt{a_1}}$ .So, $X$ is reduced to the Cases of (16) and (17).\vspace*{1mm}

If $a_3\neq0$, then we can make the coefficient of
$X_1$ vanish by $F^{s_1}_1$; By setting $s_1=-a_1$. Scaling $X$ if necessary, we can assume that $a_3=1$. then we can make $a_2=\pm 1$ and $a_4=\pm 1$ by $F^{s_3}_3$ and $F^{s_6}_6$; By setting $s_3=-\ln{\left| {a_4} \right|}$  and $s_6=\ln{\left| {a_2} \right|}$ respectively. So, $X$ is reduced to the Cases of (18), (19), (20) and (21).
\end{proof}
\section{Similarity reduction of system (2)}
The system (2) is expressed in the coordinates $(x,t,a,b,c)$, so we must search for this system's form in specific coordinates in order to reduce it. Those coordinates will be constructed  by searching for independent  invariants $w,f,h,k$ corresponding to the infinitesimal generator. Hence by using the chain rule, the expression of the system in the new coordinate leads to the reduced system.
Now, we find some of nontrivial solution of system (2).

First, consider $X_3=x\partial_x-2b\partial_b-c\partial_c$. For determining independent invariants $I$, we ought to solve the first PDEs $X_i(I)=0$, that is
\begin{equation*}
(x\partial_x-2b\partial_b-c\partial_c)I=x\frac{\partial I}{\partial x}+0\frac{\partial I}{\partial t}+0\frac{\partial I}{\partial a}-2b\frac{\partial I}{\partial b}-c\frac{\partial I}{\partial c},
\end{equation*}
which is a homogeneous first order PDE. So we must solve the associated characteristic ODE
\begin{equation*}
\frac{dx}{x}=\frac{dt}{0}=\frac{da}{0}=\frac{db}{-2b}=\frac{dc}{-c}.
\end{equation*}
Hence, we obtain four functionally independent invariants $w=t$, $f=a$, $h=bx^{2}$ and $k=cx$.
If we treat $f$, $h$ and $k$ as functions of $w$, we can compute formulae for the derivatives of $a$, $b$ and $c$ with respect to $x$ and $t$ in terms of $w$, $f$,  $h$, $k$ and the derivatives of $f$,  $h$ and $k$ with respect to $w$. We have $a=f(w)=f(t)$, $b=x^{-2}h(w)=x^{-2}h(t)$ and $c=x^{-1}k(w)=x^{-1}k(t)$. So by using the chain rule we have,
 \begin{eqnarray*}
\begin{array}{lclcl}
a_1=a_x=\frac{\partial a}{\partial x}=\frac{\partial f}{\partial x}=\frac{\partial f}{\partial w}\frac{\partial w}{\partial x}=\frac{\partial f}{\partial t}\frac{\partial t}{\partial x}=0,&&a_{11}=a_{xx}=0,\\
a_2=a_t=\cdots =f_t,&&a_{22}=f_{tt},\\
b_1=-2x^{-3}h,&&b_{11}=6x^{-4}h,\\
b_2=x^{-2}h_t,&&b_{22}=x^{-2}h_{tt},\\
c_1=-x^{-2}k,&&c_{11}=2x^{-3}k,\\
c_2=x^{-1}k_t,&&c_{22}=x^{-1}k_{tt},\\
a_{12}=0,&&b_{12}=-2x^{-3}h_t,\\
c_{12}=-x^{-2}k_t.
\end{array}\end{eqnarray*}

Substituting these in the system (2) we obtain the reduced system,
 \begin{eqnarray*}
\begin{array}{lclcl}
h_{tt}=0,&&&&f_th_t+f_tk-k^2_t+hf_{tt}+fk_t=0,\\
h_t-k=0,&&&&2f_th-2kk_t+2fk+hk_{tt}=0,\\
k_{tt}=0,&&&&3hk_t-5h_tk-k^2+6fh=0.
\end{array}\end{eqnarray*}

that is a system of ODE. By solving this system we obtain two types of solutions

\begin{equation*}
1)\left \{ \begin{array}{lcr} f = f(t)\\ h = 0  \\ k = 0 \\ \end{array} \right.
\hspace*{20mm}  2)\left \{ \begin{array}{lcr} f = \frac{r^2_2}{r_2t+r_1}\\ h =r_2t+r_1  \\ k =r_2 \\  \end{array} \right.
\end{equation*}

where $r_1$ and $r_2$ are arbitrary constants in type (2) and $f(t)$ is an arbitrary function in type (1). We can compute all of invariant solutions for other symmetry generators in a similar way. Some of  infinitesimal symmetries and  their Lie invariants are listed in the following Table.
\begin{table}[h]
\begin{center}
Table I
\begin{tabular}{c|llllllll}
\cline{1-9}
$i$ & $V_i$  & $w_i$    & $f_i(w)$    & $h_i(w)$  & $k_i(w)$ & $a_i$ & $b_i$ & $c_i$   \\
\cline{1-9}
$1$ & $X_1$ & $t$    & $a$  & $b$    & $c$  & $f(w)$ & $h(w)$  & $k(w)$  \\
$2$ & $X_2$ & $x$   & $a$  & $b$    & $c$  & $f(w)$ & $h(w)$  & $k(w)$\\
$3$ & $X_6$ & $x$   & $a$  & $bt^{-2}$    & $ct^{-1}$  & $f(w)$ & $t^2h(w)$  & $tk(w)$\\
$4$ & $X_1+X_7$ & $t$ & $ae^{-x}$  & $be^{-x}$    & $ce^{-x}$  & $e^{x}f(w)$ & $e^{x}h(w)$  & $e^{x}k(w)$    \\
$5$ & $X_2+X_7$ & $x$ & $ae^{-t}$  & $be^{-t}$    & $ce^{-t}$  & $e^{t}f(w)$ & $e^{t}h(w)$  & $e^{t}k(w)$\\
$6$ & $X_6+X_7$ & $x$ & $at^{-1}$  & $bt^{-3}$    & $ct^{-2}$  & $tf(w)$ & $t^3h(w)$  & $t^2k(w)$ \\
$7$ & $X_1+X_6+X_7$ & $te^{-x}$ & $ae^{-x}$  & $be^{-3x}$    & $ce^{-2x}$  & $e^{x}f(w)$ & $e^{3x}h(w)$  & $e^{2x}k(w)$\\
\cline{1-9}
\end{tabular} \\[5mm]
\end{center}
\end{table}

Now we list the reduced form of system (2) corresponding to infinitesimal generators and obtain some of its invariant solutions.
Reduced system for case1)
 \begin{eqnarray*}
\begin{array}{lclclclclclcl}
h_{tt}=0,&&& k_{tt}=0,&&& f_th_t-k^2_t+hf_{tt}=0,&&&hk_{tt}=0.\\
\end{array}\end{eqnarray*}

Invariant solutions for case1)
\begin{eqnarray*}
\begin{array}{lclcl}
1)\ \left\{ \begin{gathered}
  f(t)= f(t) \hfill \\
  h(t)= 0 \hfill \\
  k(t)=r_1 \hfill \\
\end{gathered}  \right. , && 2)\ \left\{\begin{gathered}f(t)=\frac{r^2_5t}{r_3}+(\frac{r_1r_3-r_4r^2_5}{r^2_3})\ln(r_3t+r_4)+r_2 \hfill \\
  h(t)=r_3t+r_4 \hfill \\
  k(t)=r_5t+r_6 \hfill \\
\end{gathered} \right.
 \end{array}\end{eqnarray*}

Reduced system for case2)
 \begin{eqnarray*}
\begin{array}{lclclclclclc}
f_{xx}=0,&&k_{xx}=0,&& f_xh_x-k^2_x+fh_{xx}=0,&& kf_{xx}-fk_{xx}=0.
\end{array}\end{eqnarray*}

Invariant solutions for case2)
\begin{eqnarray*}
\begin{array}{lclcl}
1)\ \left\{ \begin{gathered}
  f(x)=0 \hfill \\
  h(x)=h(x) \hfill \\
 k(x)=r_1 \hfill \\
\end{gathered}  \right. , && 2)\ \left\{\begin{gathered}f(x)=r_3x+r_4 \hfill \\
  h(x)=\frac{r^2_5x}{r_3}+(\frac{r_1r_3-r_4r^2_5}{r^2_3})\ln(r_3x+r_4)+r_2 \hfill \\
  k(x)=r_5x+r_6 \hfill \\
\end{gathered} \right.
 \end{array}\end{eqnarray*}

Reduced system for case3)
\begin{eqnarray*}
\begin{array}{lclcl}
f_{xx}-2h=0,&& 2k_xk-kf_{xx}+fk_{xx}=0\\
2h_x+k_{xx}=0,&& f_xk-k^2-fk_x=0\\
-f_xh_x-3h_xk-hk_x+k^2_x-fh_{xx}=0,\\
\end{array}\end{eqnarray*}

Invariant solutions for case3)
\begin{eqnarray*}
\begin{array}{lclcl}
1)\left\{ \begin{gathered}
k=0  \hfill \\
f=\frac{r_1x^2}{2}+r_2x+r_3 \hfill \\
h=\frac{r_1}{2}\hfill \\
\end{gathered}  \right. , 2)\left\{\begin{gathered}                                   k=r_2x+r_3 \hfill \\
f=k(x+r_1)\hfill \\
h=r_2\hfill \\
\end{gathered} \right. ,3)\left\{ \begin{gathered}
k=\frac{128-r_1}{3(x+r_2)^2}+r_3x+r_4\hfill \\
f=-\frac{4kk_{xx}}{k_{xxx}} \hfill \\                          h=\frac{k_xk_{xxx}-2k^2_{xx}}{k_{xxx}}\hfill \\
\end{gathered}  \right .
 \end{array}\end{eqnarray*}

Reduced system for case4)
 \begin{eqnarray*}
\begin{array}{lclcl}
h_{tt}-f=0,&& f_th_t+f_tk-k^2_t+hf_{tt}=0,&& h_t+k=0,\\
f_t+k_{tt}=0,&&hk_{tt}-f_th+2kk_t=0,&&2fh-2hk_t+3h_tk-k^2=0.\\
\end{array}\end{eqnarray*}

Invariant solutions for case4)
\begin{eqnarray*}
\begin{array}{lclcl}
1)\left\{ \begin{gathered}
k(t)=0  \hfill \\
h(t)=0 \hfill \\
f(t)=0\hfill \\
\end{gathered}  \right. ,&& 2)\left\{\begin{gathered}                                            k(t)=0 \hfill \\
h(t)=r_1\hfill \\
f(t)=0\hfill \\
\end{gathered} \right. ,&&3)\left\{ \begin{gathered}
k(t)=r_2e^{r_1t}\hfill \\
h(t)=-\frac{r_2}{r_1}e^{r_1t}\hfill \\                                                                     f(t)=-r_1r_2e^{r_1t}\hfill \\
\end{gathered}  \right .
 \end{array}\end{eqnarray*}

Reduced system for case5)
 \begin{eqnarray*}
\begin{array}{lclcl}
f_{xx}-h=0,&&2hf-2fk_x+3f_xk-k^2 =0\\
h_x+k_{xx}=0,&&fh_x-2k_xk+kf_{xx}-fk_{xx}-hk=0\\
f_x+k=0,&&h_xf_x+h_xk-k^2_x+fh_{xx}=0
\end{array}\end{eqnarray*}

Invariant solutions for case5)
 \begin{eqnarray*}
\begin{array}{lclcl}
1)\left\{ \begin{gathered}
k(x)=0  \hfill \\
f(x)=0 \hfill \\
h(x)=0\hfill \\
\end{gathered}  \right. ,&& 2)\left\{\begin{gathered}                                            k(x)=0 \hfill \\
f(x)=r_1\hfill \\
h(x)=0\hfill \\
\end{gathered} \right. ,&&3)\left\{ \begin{gathered}
k(x)=r_2e^{r_1x}\hfill \\
f(x)=-\frac{r_2}{r_1}e^{r_1x}\hfill \\                                                                     h(x)=-r_1r_2e^{r_1x}\hfill \\
\end{gathered}  \right .
 \end{array}\end{eqnarray*}

 Reduced system for case6)
 \begin{eqnarray*}
\begin{array}{lclcl}
f_{xx}-6h=0,&&-4f_xk-3fh+3fk_x+4k^2 =0\\
3h_x+k_{xx}=0,&&-fh_x+4k_xk-kf_{xx}+fk_{xx}+2hk=0\\
f_x+2k=0,&&f_xh_x+4h_xk+hk_x-k^2_x+fh_{xx}=0
\end{array}\end{eqnarray*}

Invariant solutions for case6)
\begin{eqnarray*}
\begin{array}{lclcl}
1)\ \left\{ \begin{gathered}
k(x)=0 \hfill \\
h(x)=0 \hfill \\
f(x)=r_1 \hfill \\
\end{gathered}  \right. , && 2)\ \left\{\begin{gathered}                            k(x)=-\frac{27}{(r_1x+r_2)^3} \hfill \\
h(x)=-\frac{1}{3}k_x \hfill \\
f(x)=-\frac{3k^2}{k_x} \hfill \\
\end{gathered} \right.
 \end{array}\end{eqnarray*}

Reduced system for case7)
 \begin{eqnarray*}
\begin{array}{lclcl}
{h''}w-2{h'}-{k''}w^2+3{k'}w-4k=0,&&\hspace{-3mm}{f''}w^2-{f'}w+f-{h''}=0 \\
 {f'}{h'}-2{f'}k-{k'}^2-2k{f''}w+h{f''}+f{k''}w=0,&&\hspace{-3mm} {k''}-{f''}w=0\\
{f'}(3h-{h'}w)+k(-3{k'}+{f''}w^2-{f'}w-3f)-\\
(fw^2-kw+h){k''}+{k'}^2w+3f{k'}w=0\\
{f'}({h'}w^2-3hw)-6f({h'}w-2h)+6{h'}k-{k'}^2w^2+\\
4{k'}(kw-h)-4k^2+{h''}(fw^2-2kw)+h{k''}w=0
\end{array}\end{eqnarray*}

where $f',\cdots$ are derivatives with respect to $w$ and $r_i$'s are arbitrary constants.

We can construct reduced systems for all generators in Theorem 4.1 and obtain their invariant solutions. Then by substituting invariant solutions in Table I, we obtain corresponding solutions for system (2). For example consider the invariant solution number (3) for case 5. by using Table I, we obtain solutions
  \begin{equation*}
\begin{array}{lclcl}
a=-\frac{r_2}{r_1}e^{r_1x}e^t,&&b=-r_1r_2e^{r_1x}e^t,&&c=r_2e^{r_1x}e^t.
\end{array}\end{equation*}

for system (2). Consequently we can determine the general form of Einstein-Walker manifolds for case 5, by substituting $a$, $b$ and $c$ in (1). In addition we can construct many other solutions from this own by using Theorem 3.3, as we did in example 3.4.

\section*{Conclusion}
In this paper, by applying the method of Lie symmetries, we find the Lie point symmetries group of system(2). Also, we have obtained the one-parameter optimal system of subalgebras for (2).Then the Lie invariants and similarity reduced systems and some of the invariant solutions are obtained. In conclusion, we specified a large class of Einstein-Walker manifolds.


%
\end{document}